\newtheorem{theorem}{Theorem}[section]
\newtheorem{remark}{Remark}[section]
\newtheorem{proposition}{Proposition}[section]
\newcommand{\be}{\begin{equation}}
\newcommand{\ee}{\end{equation}}
\renewcommand{\theequation}{\thesection.\arabic{equation}}
\renewcommand{\thetheorem}{\thesection.\arabic{theorem}}
\renewcommand{\theequation}{\thesection.\arabic{equation}}
\begin{document}

\title[]
{A motivated proof of Gordon's identities}

\author{James Lepowsky and Minxian Zhu}

\begin{abstract}

We generalize the ``motivated proof'' of the Rogers-Ramanujan
identities given by Andrews and Baxter to provide an analogous
``motivated proof'' of Gordon's generalization of the Rogers-Ramanujan
identities.  Our main purpose is to provide insight into certain
vertex-algebraic structure being developed.

\end{abstract}

\maketitle

\renewcommand{\theequation}{\thesection.\arabic{equation}}
\renewcommand{\thetheorem}{\thesection.\arabic{theorem}}
\setcounter{equation}{0} \setcounter{theorem}{0}
\setcounter{section}{0}

\section{Introduction}
\setcounter{equation}{0}

In \cite{AB}, G. Andrews and A. Baxter have provided an interesting
``motivated proof'' of the two Rogers-Ramanujan identities (among the
large number of proofs in the literature), which we write in the form:
\begin{eqnarray*}
\prod_{n \geq 1, \,\, n \not\equiv 0, \, \pm 2 \, (\text{mod } 5)}
\frac{1}{1-q^n}
& = &
\sum_{m \geq 0} p_1 (m)q^m
\end{eqnarray*}
and
\begin{eqnarray*}
\prod_{n \geq 1, \,\, n \not\equiv 0, \, \pm 1 \, (\text{mod } 5)}
\frac{1}{1-q^n}
& = &
\sum_{m \geq 0} p_2 (m)q^m,
\end{eqnarray*}
where
\begin{eqnarray*}
p_1 (m)
& = & 
\text{the number of partitions of $m$ for which adjacent parts have}
\nonumber \\
& &
\quad \text{difference at least 2}
\end{eqnarray*}
and
\begin{eqnarray*}
p_2 (m)
& = & 
\text{the number of partitions of $m$ for which adjacent parts have}
\nonumber \\
& & 
\quad \text{difference at least 2 and in which 1 does not appear,}
\end{eqnarray*}
and where $q$ is a formal variable.  The idea was to start from the
product sides, in which the difference-two condition is invisible, and
to both motivate the expressions on the right-hand sides and prove the
two identities, as follows:

One subtracts the second product side, called $G_2 (q)$, from the
first one, called $G_1 (q)$, then one divides the difference by $q$,
giving a formal series $G_3 (q)$, and then one forms $G_4 (q)=(G_2 (q)
- G_3 (q))/q^2$.  One repeats this process, giving $G_{i} (q)=(G_{i-2}
(q) - G_{i-1} (q))/q^{i-2}$ for all $i \geq 3$.  One notices
empirically that for each $i \geq 1$, $G_i (q)$ is a formal power
series (that is, it involves only nonnegative powers of $q$), it has
constant term $1$, and $G_i (q) - 1$ is divisible by $q^i$.  This is
the ``Empirical Hypothesis'' of Andrews-Baxter.  Assuming its truth,
one easily gets the two Rogers-Ramanujan identities.  Then, with this
as motivation, one proceeds to prove the Emprical Hypothesis directly
{}from the product sides, thus proving the Rogers-Ramanujan
identities.  (In \cite{AB}, $q$ is taken to be a complex variable of
absolute value less than 1, but in fact, the content of the argument
is purely formal, and we shall take $q$ to be a formal variable.)

An initial motivation for the work in \cite{AB} was to show {\it from
the product sides} the highly non-obvious fact that the difference of
the two product sides (in the same order as above) is a formal power
series with nonnegative coefficients.  This was Leon Eherenpreis's
problem, and Andrews and Baxter gave a motivated proof of this fact as
preparation for their motivated proof of the identities themselves.
Also, as is recalled in \cite{AB}, that proof of the identities is
closely related to Baxter's proof of the identities, starting in
\cite{B}, and moreover, that proof is also closely related to Rogers's
and Ramanujan's proof in \cite{RR}.

In the present paper, we generalize the Andrews-Baxter ``motivated
proof'' to give an analogous proof of Gordon's form (\cite{G},
\cite{A1}) of the Gordon-Andrews generalizations of the
Rogers-Ramanujan identities, essentially in the form in which they are
presented in Theorem 7.5 of \cite{A2}, where for each $k \geq 2$ and
$i=1,\dots, k$, a suitable infinite product in $q$ is expressed as a
formal power series in $q$ for which the coefficient of $q^m$ is the
number of partitions of $m$ such that parts at distance $k-1$ have
difference at least 2 and such that 1 appears at most $k-i$ times; the
case $k=2$ is the pair of Rogers-Ramanujan identities.  (For a
partition $m=p_1 + \cdots + p_n$ of $m$ with $p_1 \geq \cdots \geq p_n
> 0$ and for $t \geq 1$, saying that parts at distance $t$ have
difference at least 2 means that $p_s - p_{s+t} \geq 2$ whenever $s
\geq 1$ and $s + t \leq n$.)  We do not address Andrews's multisum
form of the sum sides, as presented in Theorem 7.8 of \cite{A2}.
While our proof is (necessarily) more complicated than the case $k=2$,
it is similar, although interesting new phenomena arise.  Also, since
we know what is going to happen, we take the liberty of identifying
the appropriate analogue and generalization of the Empirical
Hypothesis as our ``Empirical Hypothesis,'' even though we did not
observe its validity empirically before actually proving it directly
{}from the product sides.  Our proof includes (a variant of) the proof
in \cite{AB} as a special case.

Our reason for wanting to work out such a proof stemmed from the
relations between the Rogers-Ramanujan identities, and
generalizations, and what is now known to be vertex operator algebra
theory, as follows:

By retrospective analogy with the approach to the Rogers-Ramanujan
identities in \cite{AB}, the vertex-operator-theoretic proof of the
Rogers-Ramanujan identities along with the vertex-operator-theoretic
interpretation of their Gordon-Andrews-Bressoud generalizations in
\cite{LW2}--\cite{LW4} also started from the product sides as
``given'' \cite{LM}, and the problem at the time was to discover what
turned out to be new structure, not previously anticipated, that would
``explain'' the sum sides (the difference-two condition and its
variants).  The result, based on the vertex operator theory whose
discovery and development was the subject of those works, was the
theory and application of ``$Z$-algebras''---primarily {\it twisted}
$Z$-algebras in \cite{LW2}--\cite{LW4}, and then {\it untwisted}
$Z$-algebras in \cite{LP},---which later turned out to be understood
in retrospect as the natural generating substructures of certain
generalized vertex algebras, or abelian intertwining algebras, as
developed in \cite{DL}, and twisted modules for them.  (These
$Z$-algebras, both untwisted and twisted, were also to arise as
``parafermion algebras'' in conformal field theory (\cite{ZF1},
\cite{ZF2}).)  In \cite{LW2}--\cite{LW4}, each identity was related to
certain vertex-operator-theoretic structure constructed from a certain
module for an affine Lie algebra, and the structures associated with
different identities were not ``compared'' with one another, in the
spirit of product sides being subtracted, etc.; the structures for the
different identities were developed in parallel (with the proofs for
the parallel structures certainly being closely related).  The
structures were based on twisted $Z$-operators, built starting from
the twisted vertex operator in \cite{LW1}.  Later, a very different
vertex-algebraic approach to the sum sides of the Rogers-Ramanujan and
Gordon-Andrews identities was developed in \cite{CLM1}, \cite{CLM2},
\cite{CalLM1} and \cite{CalLM2}, this time based on {\it untwisted
intertwining} operators (in the sense of vertex operator algebra
theory), and this time, indeed relating the family of different
identities with the same ``modulus'' (the number $2k+1$ in the
notation above).  In this work, the classical Rogers-Ramanujan
recursion ($q$-difference equation) and Rogers-Selberg recursions had
suggested what turned out to become certain systems of exact sequences
constructed from untwisted intertwining operators among the
``principal subspaces,'' in the sense of \cite{FS1}--\cite{FS2}, of
different modules for certain vertex algebras, and it was this
vertex-algebraic structure that was of primary interest.

With this as background, we can now say that the initial reason for
our interest in the motivated proof in \cite{AB} is that that proof
suggested to one of us (J.\,L.)  and Antun Milas the potential new
idea in vertex operator algebra theory to use {\it twisted}
intertwining operators among {\it twisted} modules for suitable
vertex-algebraic structures to develop new structure in the theory
that would ``re-explain'' the identities from this new point of view.
Such a program is underway.  As was the case in the work mentioned
above, the potential new structure suggested, this time, by the
``motivated proof'' is our main goal.

In Section 2 we give our ``motivated proof'' of Gordon's identities,
in Section 3 we reinterpret the sequence of equalities (\ref{Gi}) at
the core of the proof, and in Section 4 we ``explain'' the meaning of
this sequence of equalities.

The series in $q$ and $z$ below are formal series (rather than
convergent series in complex variables).

\section{The motivated proof}
\setcounter{equation}{0}

Fix an integer $k \geq 2$.  For each $i = 1,\dots,k$, define
\begin{eqnarray}\label{Gidef}
G_i & = &
\prod_{n \geq 1, \,\, n \not\equiv 0, \, \pm (k +1 -i) \, (\text{mod } 2k+1)}
\frac{1}{1-q^n}.
\end{eqnarray}
Recalling the Jacobi triple product identity,
\begin{eqnarray*}
\sum_{\lambda \in \mathbb{Z}} (-1)^\lambda z^\lambda q^{\lambda^2} 
& = &
\prod_{n \geq 0} (1-q^{2n+2})(1- z q^{2n+1})(1-z^{-1} q^{2n+1}),
\end{eqnarray*}
and replacing $q$ by $q^{\frac{2k+1}{2}}$ and $z$ by
$q^{\frac{2i-1}{2}}$, we have
\begin{eqnarray}\label{Gk1i}
G_i
& = &
\frac{ 1+ \sum_{\lambda \geq 1}
(-1)^\lambda q^{ (2k+1) {\lambda \choose 2} + (k-i+1) \lambda} (1 + q^{(2i-1) \lambda })
}{ \prod_{n \geq 1} (1-q^n)} 
\\
\label{Gk1i2}
& = &
\frac{ \sum_{\lambda \geq 0}
(-1)^\lambda q^{ (2k+1) {\lambda \choose 2} + (k+i) \lambda} (1 - q^{ (k-i+1) (2 \lambda+1)} )}
{ \prod_{n \geq 1} (1-q^n)} 
\end{eqnarray}
for $i=1, \dots, k$.

Define $k-1$ further formal series $G_{k + 1}, \dots, G_{2k - 1}$ by
\begin{eqnarray} 
G_{k -1 +i} & = & \frac{G_{k -i +1 } - G_{k- i + 2}}{q^{i-1} }
\end{eqnarray}
for $i = 2, \dots, k$.  Then for these new series, by (\ref{Gk1i}) we
have
\begin{eqnarray}\label{Gformulaj=1}
& G_{k -1 +i} &
\nonumber \\
& = & \frac{
\sum_{\lambda \geq 1}
(-1)^\lambda q^{ (2k+1) {\lambda \choose 2} + i \lambda} (1 + q^{(2k -2i +1) \lambda })
-  \sum_{\lambda \geq 1}
(-1)^\lambda q^{ (2k+1) {\lambda \choose 2} + (i-1) \lambda} (1 + q^{(2k - 2i +3) \lambda })
}{q^{i-1}  \prod_{n \geq 1} (1-q^n)}
\nonumber \\
& = & \frac{
 \sum_{\lambda \geq 1}
(-1)^{ \lambda}  q^{ (2k+1) {\lambda \choose 2} + (i -1) \lambda}
[ q^\lambda (1 + q^{ (2k -2i +1) \lambda} ) - (1 + q^{ (2k - 2i +3 ) \lambda}) ]
}{q^{i -1}  \prod_{n \geq 1} (1-q^n)}
\nonumber \\
& = & \frac{
 \sum_{\lambda \geq 1}
(-1)^{ \lambda +1 }  q^{ (2k+1) {\lambda \choose 2} + ( i -1) \lambda}
(1 - q^\lambda) (1 - q^{ (2k -2i +2) \lambda} )
}{ q^{i-1}  \prod_{n \geq 1} (1-q^n)}
\nonumber \\
& = & \frac{
 \sum_{\lambda \geq 0}
(-1)^{ \lambda }  q^{ (2k+1) {\lambda \choose 2} + (2k + i ) \lambda}
(1 - q^{\lambda+1}) ( 1 - q^{ (k - i + 1 ) (2\lambda + 2) } )
}{ \prod_{n \geq 1} (1-q^n) }
\end{eqnarray}
for $i=2, \dots, k$.  In particular, $G_{k -1 +i} \in \mathbb{C}[[q]]$
(that is, $G_{k -1 +i}$ is a formal power series), and its constant
term is $1$.

Moreover, (\ref{Gformulaj=1}) remains valid for $i=1$ as well, since
the right-hand side for $i=1$ agrees with (\ref{Gk1i2}) for $i=k$.
Indeed, when $i=1$, the right-hand side of (\ref{Gformulaj=1}) is
\begin{eqnarray}
\frac{
\sum_{\lambda \geq 0}
(-1)^{ \lambda }  q^{ (2k+1) {\lambda \choose 2} + (2k + 1) \lambda}
(1 - q^{\lambda+1}) ( 1 - q^{ 2k (\lambda +1) } )
}{ \prod_{n \geq 1} (1-q^n) }.
\nonumber
\end{eqnarray}
Breaking up the last factor in the numerator, we obtain two terms, the
first of which can be rewritten as
\begin{eqnarray}
\frac{
\sum_{\lambda \geq 0}
(-1)^{ \lambda }  q^{ (2k+1) {\lambda \choose 2} + 2k \lambda}
(q^\lambda - q^{2 \lambda+1})
}{ \prod_{n \geq 1} (1-q^n) }.
\nonumber
\end{eqnarray}
After the re-indexing $\lambda \to \lambda-1$, the second term becomes
\begin{eqnarray}
\frac{
\sum_{\lambda \geq 1}
(-1)^{ \lambda }  q^{ (2k+1) {\lambda-1 \choose 2} + (2k + 1) (\lambda-1)
+ 2k \lambda}
(1 - q^{\lambda})
}{ \prod_{n \geq 1} (1-q^n) }.
\nonumber
\end{eqnarray}
Noting that $ {\lambda-1 \choose 2} + (\lambda-1) = {\lambda \choose
2}$ and that allowing $\lambda = 0$ in the numerator only results in
adding zero, we can combine the two terms to obtain (\ref{Gk1i2}) for
$i = k$.  That is, (\ref{Gformulaj=1}) holds for all $i =1, 2, \dots,
k$.

In general, for $j \geq 1$ and $i = 2, \dots, k$, define the formal series
\begin{eqnarray}\label{Gjk-1i}
G_{ (k-1)j + i} & = & 
\frac{ G_{ (k-1)(j-1) + k- i +1 } - G_{ (k-1)(j-1) + k - i +2 }}{q^{ (i-1)j}}. 
\end{eqnarray}

\begin{theorem}\label{G}
For $j \geq 0$ and $i = 1, \dots, k$, $G_{(k -1)j +i} \in
\mathbb{C}[[q]]$ and in fact

\begin{eqnarray}\label{Gformula}
& G_{(k-1) j + i} & 
\nonumber \\
& = & \frac{
\sum_{\lambda \geq 0} (-1)^\lambda 
q^{ (2k +1) {\lambda \choose 2} + [ k (j+1) + i ] \lambda }
(1- q^{\lambda +1} ) \cdots (1 - q^{\lambda + j} )
(1 - q^{ ( k - i + 1) (2 \lambda + j + 1) } )
}{\prod_{n \geq 1} (1-q^n) }. \nonumber\\
\end{eqnarray}
In particular, denoting the right-hand side of (\ref{Gformula}) by
$H_{(k-1) j + i}$, we have that for each $j \geq 1$, the two
expressions for $G_{kj - j + 1}$ given by (\ref{Gformula}) are equal:
\begin{eqnarray}\label{H=H}
H_{(k-1) j + 1} & = & H_{(k-1) (j-1) + k}.
\end{eqnarray}
\end{theorem}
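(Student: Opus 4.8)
The plan is to prove the formula (\ref{Gformula}) by induction on $j$, treating the two boundary indices $i=1$ and $i=k$ with care, since the defining recursion (\ref{Gjk-1i}) is only available for $i = 2, \dots, k$. Write $H_{(k-1)j+i}$ for the right-hand side of (\ref{Gformula}). The base case $j = 0$ is immediate: the product $(1-q^{\lambda+1})\cdots(1-q^{\lambda+j})$ is then empty and $H_i$ reduces verbatim to the expression (\ref{Gk1i2}) for $G_i$; the case $j = 1$ has already been recorded as (\ref{Gformulaj=1}). So I assume (\ref{Gformula}) holds for all $i = 1, \dots, k$ at level $j - 1$ and prove it at level $j$.

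For $i = 2, \dots, k$ I would substitute the inductive hypothesis into the recursion (\ref{Gjk-1i}). The two series on the right are indexed at level $j-1$ by $i' = k - i + 1$ and $i'' = k - i + 2$, both lying in $\{1, \dots, k\}$, so the hypothesis applies. Putting the difference over the common denominator $\prod_{n \geq 1}(1-q^n)$, and factoring out $q^{(kj + k - i + 1)\lambda}$ together with the partial product $(1-q^{\lambda+1})\cdots(1-q^{\lambda + j - 1})$, the bracketed factor in the numerator becomes
\[
(1 - q^{i(2\lambda + j)}) - q^\lambda (1 - q^{(i-1)(2\lambda+j)}),
\]
which I would rearrange as $(1 - q^\lambda) + q^\lambda q^{(i-1)(2\lambda + j)}(1 - q^{\lambda + j})$. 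This is the crux of the computation. The second summand carries the factor $(1 - q^{\lambda + j})$, which extends the partial product to the full product $(1-q^{\lambda+1})\cdots(1-q^{\lambda+j})$; collecting exponents and cancelling $q^{(i-1)j}$ shows that this piece contributes exactly the ``$1$'' part of the last factor of $H_{(k-1)j+i}$, with the correct exponent $[k(j+1)+i]\lambda$. The first summand $(1 - q^\lambda)$ annihilates the $\lambda = 0$ term, so after the re-indexing $\lambda \to \lambda + 1$ and the identity ${\lambda + 1 \choose 2} = {\lambda \choose 2} + \lambda$ the product again becomes $(1-q^{\lambda+1})\cdots(1-q^{\lambda+j})$, and a direct check of the resulting exponent and sign shows it reproduces the remaining ``$-q^{(k-i+1)(2\lambda+j+1)}$'' part of $H_{(k-1)j+i}$. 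Adding the two pieces gives (\ref{Gformula}) at level $j$ for $i = 2, \dots, k$; since every surviving exponent is nonnegative, the division by $q^{(i-1)j}$ leaves a formal power series, so $G_{(k-1)j+i} \in \mathbb{C}[[q]]$ as well.

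This leaves the index $i = 1$ at level $j$, which is not produced by the recursion. The point is that $(k-1)j + 1 = (k-1)(j-1) + k$, so the series $G_{(k-1)j+1}$ is literally the series already treated as the $i = k$ case at level $j - 1$; by the inductive hypothesis it equals $H_{(k-1)(j-1)+k}$. Thus (\ref{Gformula}) at $(j,1)$ amounts to the assertion that the two explicit expressions $H_{(k-1)j+1}$ and $H_{(k-1)(j-1)+k}$ coincide, which is precisely (\ref{H=H}). I would therefore prove (\ref{H=H}) as a standalone formal identity between two explicit $q$-series, generalizing the computation already carried out in the text for $j = 1$: expand the last factor $(1 - q^{k(2\lambda + j + 1)})$ of $H_{(k-1)j+1}$ into two sums, re-index the second by $\lambda \to \lambda - 1$, use ${\lambda - 1 \choose 2} + (\lambda - 1) = {\lambda \choose 2}$, and recombine, noting that admitting $\lambda = 0$ only adds a vanishing term; the result is exactly $H_{(k-1)(j-1)+k}$. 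With (\ref{H=H}) in hand the $i = 1$ case follows, completing the induction.

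I expect the main obstacle to be purely organizational: keeping the exponent $(2k+1){\lambda \choose 2} + [\,\cdots\,]\lambda$, the shifting partial products, and the $q^{(i-1)j}$ denominator aligned so that the two halves of the factored bracket land on the two halves of the target last factor. The single conceptual step that makes everything work is the rewriting of the bracket as $(1 - q^\lambda) + q^\lambda q^{(i-1)(2\lambda+j)}(1 - q^{\lambda+j})$, which simultaneously supplies the missing product factor $(1 - q^{\lambda+j})$ and, via the $\lambda \to \lambda+1$ shift on the other summand, produces the second term of the theta-like numerator; everything else is bookkeeping.
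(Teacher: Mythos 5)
Your proposal is correct and follows essentially the same route as the paper: induction on $j$ using the recursion (\ref{Gjk-1i}) for $i=2,\dots,k$, where your rearrangement of the bracket as $(1-q^\lambda)+q^\lambda q^{(i-1)(2\lambda+j)}(1-q^{\lambda+j})$ is precisely the paper's regrouping $(\sum_{11}-\sum_{21})+(-\sum_{12}+\sum_{22})$, followed by the $\lambda\to\lambda+1$ shift; and the $i=1$ case is handled, as in the paper, by reducing it via the tautology $(k-1)j+1=(k-1)(j-1)+k$ to the standalone identity (\ref{H=H}), proved by generalizing the $j=1$ expansion-and-reindexing argument.
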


\begin{proof}
By (\ref{Gk1i2}), (\ref{Gformula}) holds for $j=0$.  By
(\ref{Gformulaj=1}) and the above, (\ref{Gformula}) and (\ref{H=H})
both hold for $j=1$.  Take $j \geq 1$.  Suppose that (\ref{Gformula})
holds for $G_{ (k-1)j + q}$, $1 \leq q \leq k$.  We will show that it
holds for $G_{(k-1) (j +1) + i}$, $1 \leq i \leq k$.

First let $i=2, \dots, k$.  By the recursion (\ref{Gjk-1i}), we have
\begin{eqnarray}
\lefteqn{G_{(k-1) (j +1) + i} \, = \,
\frac{ G_{ (k-1)j + k- i +1 } - G_{ (k-1)j + k - i +2 } } {q^{ (i-1)(j+1)} }}
\nonumber \\
& = &
\frac{
\sum_{\lambda \geq 0} (-1)^\lambda 
q^{ (2k+1) {\lambda \choose 2} + [ k (j+1) +k - i +1 ] \lambda}
(1- q^{\lambda +1} ) \cdots (1- q^{\lambda + j})
(1- q^{ i (2 \lambda + j +1)} )
}{ q^{ (i-1)(j+1) } \prod_{n \geq 1} (1-q^n) }
\nonumber \\
& & - \, \frac{
\sum_{\lambda \geq 0} (-1)^\lambda 
q^{ (2k+1) {\lambda \choose 2} + [ k (j+1) +k - i + 2 ] \lambda}
(1- q^{\lambda +1} ) \cdots (1- q^{\lambda + j}) 
(1- q^{ ( i -1) (2 \lambda + j +1)} )
}{q^{ (i-1)(j+1) }  \prod_{n \geq 1} (1-q^n) } 
\nonumber
\end{eqnarray}
We use the last factor in each of the two numerators to split each of
the two summations into two sums, $\sum_1 = \sum_{11} - \sum_{12}$, $
\sum_2 = \sum_{21} - \sum_{22}$, and we combine the terms in a
different way: $( \sum_{11} - \sum_{21} ) + ( - \sum_{12} + \sum_{22}
)$.

Now $\sum_{11} - \sum_{21}$ equals
\begin{eqnarray}\label{Sig11-21}
& & \frac{ 
\sum_{\lambda \geq 0} (-1)^\lambda 
q^{ (2k+1) {\lambda \choose 2} + [ k (j+1) +k - i +1 ] \lambda}
(1 - q^\lambda) (1- q^{\lambda +1} ) \cdots (1- q^{\lambda + j})
}{ q^{ (i-1)(j+1) } \prod_{n \geq 1} (1-q^n) }. 
\end{eqnarray}
Note that the $\lambda = 0$ term in (\ref{Sig11-21}) vanishes, so the
summation is actually over $\lambda \geq 1$.  Making the index change
$\lambda \to \lambda +1$ we obtain
\begin{eqnarray}\label{Sig11-21new}
\lefteqn{\frac{ \sum_{\lambda \geq 0} (-1)^{\lambda +1 }
q^{ (2k+1) {\lambda \choose 2} + [ k ( j+ 2) + 2 k -  i + 2 ] \lambda + k ( j+1) + k - i +1 }
(1- q^{\lambda +1} ) \cdots (1 - q^{\lambda + j +1} )
}{ q ^{ (i-1)(j+1) } \prod_{n \geq 1} (1-q^n) }}
\nonumber \\
& = & 
\frac{ \sum_{\lambda \geq 0} (-1)^{\lambda +1 }
q^{ (2k+1) {\lambda \choose 2} + [ k ( j+2) + i ] \lambda +  (k - i +1) (j+2) }
(1- q^{\lambda +1} ) \cdots (1 - q^{\lambda + j +1} )
q^{ (2k - 2i +2 ) \lambda }
}{ \prod_{n \geq 1} (1-q^n) }.
\nonumber\\
\end{eqnarray}
Similarly, 
$- \sum_{12} + \sum_{22} $
equals 
\begin{eqnarray}\label{-Sig12+22}
\lefteqn{\frac{
\sum_{\lambda \geq 0} (-1)^\lambda
q^{ (2k+1) {\lambda \choose 2} + [ k(j+1) + k +i ] \lambda + (i-1) (j+1) }
(1- q^{\lambda+1}) \cdots (1 - q^{\lambda +j }) 
( - q^{ \lambda + j +1 } + 1) 
}{ q ^{ (i-1)(j+1) } \prod_{n \geq 1} (1-q^n) }}
\nonumber\\
& = & 
\frac{
\sum_{\lambda \geq 0} (-1)^\lambda
q^{ (2k+1) {\lambda \choose 2} + [ k(j+2) +i ] \lambda }
(1- q^{\lambda+1}) \cdots (1 - q^{\lambda +j }) 
( 1 - q^{ \lambda + j +1 } ) 
}{ \prod_{n \geq 1} (1-q^n) }. 
\nonumber\\
\end{eqnarray}
Combining (\ref{Sig11-21new}) and (\ref{-Sig12+22}) we get 
\begin{eqnarray}
G_{(k-1) (j +1) + i}
& = &
\frac{
\sum_{\lambda \geq 0} (-1)^\lambda
q^{ (2k+1) {\lambda \choose 2} + [ k(j+2) +i ] \lambda }
(1- q^{\lambda+1}) \cdots
( 1 - q^{ \lambda + j +1 } ) 
( 1 - q^{ (k-i+1) ( 2 \lambda +j +2) } )
}{ \prod_{n \geq 1} (1-q^n) },
\nonumber 
\end{eqnarray}
proving (\ref{Gformula}) for the case $j+1$ and $i = 2, \dots, k$.

For the case $j+1$ and $i = 1$, we observe that (\ref{Gformula})
follows from the induction hypothesis and (\ref{H=H}) for the case
$j+1$, and (\ref{H=H}) in turn follows (for any $j$) by virtually the
same argument as the one above for $j=1$.
\end{proof}

Theorem \ref{G} implies that for $j \geq 0$,
\begin{eqnarray}
\lefteqn{G_{(k-1) j + i}} 
\nonumber\\
& = & 
\frac{ 1 - q^{ ( k-i + 1) ( j+1) } }
{ (1 - q^{ j+1} ) ( 1- q^{ j+2} ) \cdots}
\nonumber  \\
& & 
+ \, \frac{
\sum_{\lambda \geq 1} (-1)^\lambda 
q^{ (2k +1) {\lambda \choose 2} + [ k (j+1) + i ] \lambda }
(1- q^{\lambda +1} ) \cdots (1 - q^{\lambda + j} )
(1 - q^{ ( k - i + 1) (2 \lambda + j + 1) } )
}{ \prod_{n \geq 1} (1-q^n) }. 
\nonumber \\
& = & 
1 + q^{j+1} \gamma_i^{(j+1)} (q)  \quad \text{ if  } 1  \leq i \leq k-1 
\nonumber \\
& \text{or } & 
1 + q^{j+2} \gamma_k^{(j+2)} (q) \quad \text{ if  } i = k,
\nonumber 
\end{eqnarray}
where
\[
\gamma^{(j)}_i (q) \in \mathbb{C}[[q]]. 
\]
This is our ``Empirical Hypothesis,'' in the sense explained in the
Introduction.

Using (\ref{Gjk-1i}) in the form (\ref{Grecursion}) below together
with (\ref{tautology}) below, we write each $G_i$, $ 1 \leq i \leq k$,
in terms of $G_{(k-1) j +1}, \dots, G_{(k-1) j + k}$ for each
$j=0,1,2,\dots$, giving a sequence of expressions (one for each $j$)
for $G_1, \dots, G_k$ of the form
\begin{eqnarray}\label{Gi}
G_i & = & 
{}_ih^{(j)}_1 G_{(k-1) j +1} + \cdots + {}_ih^{(j)}_k G_{(k-1) j + k},
\end{eqnarray}
where for each $j$ the coefficients ${}_ih^{(j)}_l$ form a $k \times
k$ matrix ${\bf h}^{(j)}$ of polynomials in $q$ with nonnegative
integral coefficients.  More explicitly, define row vectors
\[
{}_i {\bf h}^{(j)} = [{}_ih^{(j)}_1, \dots, {}_ih^{(j)}_k].
\]
For $j=0$ we have
\[
{}_i{\bf h}^{(0)} = [0,  \dots, 1, \dots, 0],
\]
with $1$ in the $i$-th position, so that ${\bf h}^{(0)}$ is the
identity matrix.  The ${}_i{\bf h}^{(j)}$ satisfy the same set of
recursions with respect to $j$, independently of $i$.  Explicitly:

\begin{proposition}\label{hrecursions}
Let $j \geq 1$.  With the left subscript ${}_i$ suppressed, we have
\begin{eqnarray}
h^{(j)}_1 & = &
h^{(j-1)}_1 + \cdots + h^{(j-1)}_{k-1} +  h^{(j-1)}_k 
\nonumber \\
h^{(j)}_2 & = & 
(h^{(j-1)}_1 + \cdots + h^{(j-1)}_{k-1}) q^j
\nonumber \\
& \cdots & 
\nonumber \\
h^{(j)}_{k-1} & = &
(h^{(j-1)}_1 + h^{(j-1)}_2) q^{(k-2)j}
\nonumber \\
h^{(j)}_k 
& = &
h^{(j-1)}_1 q^{(k-1)j}
\nonumber 
\end{eqnarray}
or in general, 
\begin{eqnarray}\label{hlj}
h^{(j)}_l & = &
(h^{(j-1)}_1 + \cdots + h^{(j-1)}_{k - l +1}) q^{ (l-1)j }, \quad 1 \leq l \leq k. 
\end{eqnarray}
In matrix form, this is:
\begin{eqnarray}\label{h=hA}
{\bf h}^{(j)} = {\bf h}^{(j-1)}{\bf A}_{(j)},
\end{eqnarray}
with $\bf h$ our $k \times k$ matrix defined above and with
\begin{eqnarray}\label{Aj}
{\bf A}_{(j)} = 
\left[ \begin{array}{ccccc}
1 & q^j & q^{2j} & \cdots  & q^{( k-1) j} \\
\vdots & \vdots &  \vdots & \swarrow  & \vdots \\
1 & q^j & q^{2j}  & \cdots & 0 \\
1 & q^j & 0 &  \cdots & 0 \\
1 & 0 & 0 & \cdots  & 0
\end{array} \right].
\end{eqnarray}
In particular,
\begin{eqnarray}\label{h=AAA}
{\bf h}^{(j)} = {\bf A}_{(1)}{\bf A}_{(2)} \cdots {\bf A}_{(j)}
\end{eqnarray}
for all $j \ge 0$.
\end{proposition}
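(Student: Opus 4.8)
The plan is to reduce the whole proposition to a single substitution identity that rewrites each series in the block $G_{(k-1)(j-1)+1}, \dots, G_{(k-1)(j-1)+k}$ in terms of the series in the next block $G_{(k-1)j+1}, \dots, G_{(k-1)j+k}$; the recursion (\ref{hlj}) is then just the bookkeeping of one such substitution applied to the expansion (\ref{Gi}) at level $j-1$. First I would solve the defining relations (\ref{Gjk-1i}) for the block-$(j-1)$ series. Writing $a_l = G_{(k-1)(j-1)+l}$ and $b_l = G_{(k-1)j+l}$, the recursion (\ref{Gjk-1i}), read for $i = 2, \dots, k$, says exactly that the consecutive differences satisfy $a_{k-i+1} - a_{k-i+2} = q^{(i-1)j} b_i$; that is, it determines $a_1 - a_2,\, a_2 - a_3,\, \dots,\, a_{k-1} - a_k$. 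These $k-1$ relations pin down the $a_l$ only up to one common additive series, so the remaining ingredient is the ``anchor'': because the subscripts coincide, $(k-1)(j-1)+k = (k-1)j+1$, one has the tautology $a_k = b_1$ (this is precisely the equality recorded, at the level of the closed forms, by (\ref{H=H})). Telescoping from $a_k$ upward then gives the key formula $a_l = \sum_{s=1}^{k-l+1} q^{(s-1)j} b_s$ for $1 \le l \le k$.

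Next I would substitute this into the level-$(j-1)$ expansion $G_i = \sum_{l=1}^k {}_ih^{(j-1)}_l \, a_l$ from (\ref{Gi}) and interchange the order of summation. Collecting the coefficient of each $b_s = G_{(k-1)j+s}$ turns the double sum $\sum_l {}_ih^{(j-1)}_l \sum_{s \le k-l+1} q^{(s-1)j} b_s$ into $\sum_s q^{(s-1)j}\bigl({}_ih^{(j-1)}_1 + \cdots + {}_ih^{(j-1)}_{k-s+1}\bigr) b_s$; since this is by construction the level-$j$ expansion (\ref{Gi}), comparison of the coefficient of $G_{(k-1)j+s}$ yields ${}_ih^{(j)}_s = q^{(s-1)j}\bigl({}_ih^{(j-1)}_1 + \cdots + {}_ih^{(j-1)}_{k-s+1}\bigr)$, which is (\ref{hlj}) after renaming $s \to l$. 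The interchange is the only place requiring care: I must track that the inner range $s \le k-l+1$ becomes $l \le k-s+1$ when the order is reversed.

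Finally, reading (\ref{hlj}) as a matrix identity gives (\ref{h=hA}): the $(l,s)$ entry of ${\bf A}_{(j)}$ is $q^{(s-1)j}$ when $l \le k-s+1$ and $0$ otherwise, which is exactly the staircase matrix (\ref{Aj}). Since ${\bf h}^{(0)}$ is the identity matrix, a one-line induction from (\ref{h=hA}) produces (\ref{h=AAA}), and the nonnegativity and integrality of the polynomial entries is then automatic, as each ${\bf A}_{(j)}$ has entries that are monomials $q^{(s-1)j}$ or $0$. The genuinely substantive step is the anchoring tautology $a_k = b_1$: the recursion (\ref{Gjk-1i}) alone controls only differences, and it is the block-overlap identity, equivalently Theorem \ref{G} through (\ref{H=H}), that makes the telescoping determinate; everything after that is index bookkeeping.
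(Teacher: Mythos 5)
Your proof is correct and follows essentially the same route as the paper: the paper's own proof consists precisely of rewriting (\ref{Gjk-1i}) as the step-down recursion (\ref{Grecursion}) and ``repeatedly applying'' it together with the tautology (\ref{tautology}), which is exactly your telescoping formula $a_l = \sum_{s=1}^{k-l+1} q^{(s-1)j} b_s$ followed by substitution into the level-$(j-1)$ expansion (\ref{Gi}) and collection of coefficients. One correction to your closing remark, though: the anchor $a_k = b_1$ is not ``equivalently Theorem \ref{G} through (\ref{H=H})'' and is not the substantive step --- it is a pure coincidence of subscripts, $(k-1)(j-1)+k = (k-1)j+1$, so both sides denote literally the same series (the paper calls this ``the tautological fact''), whereas (\ref{H=H}) is the separate, nontrivial assertion that two \emph{closed-form} expressions $H$ agree, which is needed in the proof of Theorem \ref{G} but is used nowhere in this proposition.
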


\begin{proof}
By (\ref{Gjk-1i}),
\begin{eqnarray}\label{Grecursion}
G_{(k-1)(j-1) +l } & = &
G_{ (k-1) (j-1) + l+1 } + q^{ (k-l)j } G_{ (k-1)j + k - l +1 }
\end{eqnarray}
for $j \geq 1$, $l = 1, \dots, k-1$, and the lemma follows from the
repeated application of this formula together with the tautological
fact that
\begin{eqnarray}\label{tautology}
G_{(k-1)(j-1) + k } & = & G_{ (k-1) j + 1 }.
\end{eqnarray}
\end{proof}

In the course of the vertex-algebraic interpretation of the
Rogers-Selberg recursions in \cite{CLM2}, it was implicitly noticed
that matrices analogous to the matrices ${\bf A}_{(j)}$ along with
their inverses, involving the two variables in those recursions, could
be used to reformulate those recursions.  Such matrices indeed arise
naturally from recursions of these types.

\begin{proposition}\label{h=partitions}
For each $j \geq 1$ and $i,l=1,\dots,k$, the polynomial ${}_ih^{(j)}_l
\in \mathbb{C}[q]$ is the generating function for partitions with
difference at least 2 at distance $k-1$ such that $1$ appears at most
$k-i$ times, such that the largest part is at most $j$, and such that
$j$ appears exactly $l-1$ times.
\end{proposition}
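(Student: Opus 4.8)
The plan is to prove the identity by induction on $j$, matching the algebraic recursion (\ref{hlj}) for the entries ${}_ih^{(j)}_l$ against a parallel recursion satisfied by the proposed combinatorial generating functions. So, first I would fix $i$ and write $P^{(j)}_{i,l} \in \mathbb{C}[q]$ for the generating function (graded by the integer being partitioned) for the partitions described in the statement: those satisfying the difference-$2$-at-distance-$(k-1)$ condition, in which $1$ appears at most $k-i$ times, whose largest part is at most $j$, and in which the part $j$ appears exactly $l-1$ times. The goal is then to show ${}_ih^{(j)}_l = P^{(j)}_{i,l}$ for all $j \geq 1$ and $1 \leq i,l \leq k$.

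For the base case $j=1$, I would compute both sides directly. Since ${}_i{\bf h}^{(0)}$ is the standard basis row vector with $1$ in position $i$, (\ref{h=hA}) gives that ${}_i{\bf h}^{(1)}$ is the $i$-th row of ${\bf A}_{(1)}$, so that ${}_ih^{(1)}_l = q^{l-1}$ when $l \leq k-i+1$ and ${}_ih^{(1)}_l = 0$ otherwise. On the combinatorial side, a partition counted by $P^{(1)}_{i,l}$ has all parts equal to $1$, with $1$ occurring exactly $l-1$ times and at most $k-i$ times; hence such a partition exists, is unique, and has weight $q^{l-1}$ precisely when $l-1 \leq k-i$, giving $P^{(1)}_{i,l} = q^{l-1}$ for $l \leq k-i+1$ and $0$ otherwise. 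I would stress here that the induction must start at $j=1$ and not $j=0$: the vector ${}_i{\bf h}^{(0)}$ carries its $1$ in position $i$, whereas the degenerate $j=0$ combinatorial count (the empty partition) would place it in position $1$; the constraint that $1$ appear at most $k-i$ times is exactly what the first factor ${\bf A}_{(1)}$ installs, which is why the combinatorial reading is valid only for $j \geq 1$.

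For the inductive step, assuming the result for $j-1$ with $j \geq 2$, I would establish the combinatorial recursion
\be\label{Precursion}
P^{(j)}_{i,l} \;=\; q^{(l-1)j} \sum_{l'=1}^{k-l+1} P^{(j-1)}_{i,l'},
\ee
which is exactly (\ref{hlj}) with $h$ replaced by $P$. The bijection underlying (\ref{Precursion}) strips the $l-1$ copies of the largest part $j$ off a partition counted by $P^{(j)}_{i,l}$, leaving a partition $\pi'$ of largest part at most $j-1$ in which $j-1$ appears, say, $l'-1$ times; the removed parts contribute the weight $q^{(l-1)j}$, and since $j \geq 2$ the condition on the number of $1$'s is untouched, so $i$ rides along unchanged. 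The entire content of (\ref{Precursion}) is the range of $l'$: writing the reconstructed partition as $P_1 \geq P_2 \geq \cdots$ with $P_1 = \cdots = P_{l-1} = j$ and $P_{l-1+t} = \pi'_t$, the distance-$(k-1)$ conditions with index $s \geq l$ are literally the conditions for $\pi'$, while for $1 \leq s \leq l-1$ they read $j - \pi'_{\,s+k-l} \geq 2$; as $\pi'$ is weakly decreasing this is tightest at $s=1$ and amounts to $\pi'_{\,k-l+1} \leq j-2$, i.e.\ to the part $j-1$ failing to reach position $k-l+1$, i.e.\ to $l' \leq k-l+1$. Equivalently, this says that $j$ and $j-1$ together occur at most $k-1$ times.

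The hard part is this last junction analysis: verifying that the single local constraint coupling the block of $j$'s to the block of $(j-1)$'s is precisely what truncates the sum in (\ref{Precursion}) at $l'=k-l+1$, and that no other distance-$(k-1)$ constraints are created or destroyed by the peeling. In particular, one must observe that the interaction of a $j$ with any part $\leq j-2$ is automatically harmless (the difference is already at least $2$) and that the constraints internal to $\pi'$ are preserved verbatim under the index shift, so that (\ref{Precursion}) captures all of them with nothing left over. Granting (\ref{Precursion}), the proposition follows at once: ${}_ih^{(j)}_l$ and $P^{(j)}_{i,l}$ obey the same recursion (\ref{hlj}) and agree at $j=1$, hence agree for all $j \geq 1$ by induction.
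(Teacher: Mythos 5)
Your proposal is correct and follows essentially the same route as the paper: both proofs proceed by induction on $j$, checking that the combinatorial generating functions have the same initial values ${}_i{\bf h}^{(1)} = [1, q, \dots, q^{k-i}, 0, \dots, 0]$ at $j=1$ and satisfy the same recursion (\ref{hlj}), realized combinatorially by stripping the $l-1$ copies of the largest part $j$. The only difference is one of detail: the paper states the combinatorial peeling identity as a fact, whereas you spell out the junction analysis showing that the coupling constraint between the $j$'s and the $(j-1)$'s is exactly what truncates the sum at $l' = k-l+1$.
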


\begin{proof}
It is sufficient to show that the combinatorial generating functions
described here have the same initial values and recursions as the
polynomials ${}_ih^{(j)}_l$.  We say that a partition is of type
$(k-1, k-i)$ if it has difference at least 2 at distance $k-1$ and 1
appears at most $k-i$ times.  Then (\ref{hlj}) corresponds to the
following combinatorial fact: For $j \ge 2$,
\begin{eqnarray}
\lefteqn{\text{the number of partitions of $m$ of type $(k-1, k-i)$ 
such that the largest part is at most}}
\nonumber \\
& & \text{$j$ and such that $j$ appears exactly $l-1$ times}
\nonumber \\
& = & 
\sum_{p=1}^{k-l+1}
\text{the number of partitions of $m-(l-1)j$ of type $(k-1, k-i)$ such
that the}
\nonumber \\
& & \quad
\text{largest part is at most $j-1$ and such that $j-1$ appears
exactly $p-1$ times.}
\nonumber 
\end{eqnarray}
The initial values
\[
{}_i{\bf h}^{(1)} = [1, q, q^2, \cdots, q^{k-i}, 0, \cdots, 0]
\]
also match those of the generating functions. 
\end{proof}

Recall the products $G_i$ in (\ref{Gidef}).

\begin{theorem}\label{Gi=difference}
For $1 \leq i \leq k$, $G_i$ is the generating function for partitions
with difference at least $2$ at distance $k-1$ such that $1$ appears
at most $k-i$ times.
\end{theorem}

\begin{proof}
This follows immediately from (\ref{Gi}), Proposition
\ref{h=partitions} and the Empirical Hypothesis.
\end{proof}

This result constitutes Gordon's identities, as formulated in Theorem
7.5 of \cite{A2}; the Rogers-Ramanujan identities form the special
case $k=2$.

\section{Matrix interpretation}
\setcounter{equation}{0}

The right-hand side of (\ref{Gi}) suggests a product of matrices, and
the recursions for the ${}_ih^{(j)}_l$ come from the recursions
(\ref{Grecursion}) (or equivalently, (\ref{Gjk-1i})) for the $G_s$, $s
\geq 1$, together with (\ref{tautology}).  We now express all of this
in matrix form, and in the process we quickly rederive Proposition
\ref{hrecursions}, obtaining the matrices ${\bf h}^{(j)}$ and their
properties from (\ref{Grecursion}) and (\ref{tautology}).

Set
\begin{eqnarray*}
{\bf G}_{(0)} = 
\left[ \begin{array}{c}
G_1 \\
\vdots \\
G_k
\end{array} \right]
\end{eqnarray*}
and in general,
\begin{eqnarray*}
{\bf G}_{(j)} = 
\left[ \begin{array}{c}
G_{(k-1)j+1} \\
\vdots \\
G_{(k-1)j+k}
\end{array} \right]
\end{eqnarray*}
for $j \geq 0$.  Also set
\begin{eqnarray*}
{\bf B}_{(j)} = 
\left[ \begin{array}{cccccc}
0           & 0            & \cdots   & 0       & 0        & 1       \\
0           & 0            & \cdots   & 0       & q^{-j}   & -q^{-j} \\
0           & 0            & \cdots   & q^{-2j} & -q^{-2j} & 0       \\
\vdots      & \vdots       & \swarrow & \vdots  & \vdots   & \vdots  \\
0           & q^{-(k-2)j}  & \cdots   & 0       & 0        & 0       \\
q^{-(k-1)j} & -q^{-(k-1)j} & \cdots   & 0       & 0        & 0
\end{array} \right]
\end{eqnarray*}
for $j \geq 1$.  Then (\ref{Gjk-1i})) (or equivalently,
(\ref{Grecursion})) and (\ref{tautology}) assert that
\begin{eqnarray}\label{G=BG}
{\bf G}_{(j)} = {\bf B}_{(j)}{\bf G}_{(j-1)}
\end{eqnarray}
for $j \geq 1$, so that
\[
{\bf G}_{(j)} = {\bf B}_{(j)}{\bf B}_{(j-1)} \cdots {\bf B}_{(1)}{\bf
G}_{(0)}
\]
for $j \geq 0$.  But
\[
{\bf B}_{(j)} = ({\bf A}_{(j)})^{-1}
\]
(recall (\ref{Aj})), which gives
\[
{\bf A}_{(j)}{\bf G}_{(j)} = {\bf G}_{(j-1)}
\]
for $j \geq 1$ and
\[
{\bf G}_{(0)} = {\bf A}_{(1)}{\bf A}_{(2)} \cdots {\bf A}_{(j)}{\bf
G}_{(j)}
\]
for $j \geq 0$.  Defining ${\bf h}^{(j)}$ recursively by
\[
{\bf h}^{(0)} = {\text{identity matrix,}}
\]
\[
{\bf h}^{(j)} = {\bf h}^{(j-1)}{\bf A}_{(j)}
\]
for $j \geq 1$ (cf. (\ref{h=hA})), we have (\ref{h=AAA}) along with
(\ref{Gi}), in the form
\[
{\bf G}_{(0)} = {\bf h}^{(j)}{\bf G}_{(j)}
\]
for each $j \geq 0$.  Thus from (\ref{Grecursion}) and
(\ref{tautology}) expressed in matrix form, we have an ``automatic''
reformulation and proof of Proposition \ref{hrecursions}, along with
(\ref{Gi}).

The combinatorial interpretation of the entries of ${\bf h}^{(j)}$ in
Proposition \ref{h=partitions} is a separate matter, as is the
combinatorial interpretation of the $G_s$, $s \geq 1$, given in
Theorem \ref{G=partitions} below.

\section{Interpretation of the sequence of expressions for $G_i$}
\setcounter{equation}{0}

All of the formal power series $G_s$ for $s \geq 1$ can be interpreted
as combinatorial generating functions, and this will allow us to
``explain'' the meaning of the sequence of equalities (\ref{Gi}) for
$G_1, \dots, G_k$.

We start with the expression of $G_1, \dots, G_k$ as the generating
functions given by Theorem \ref{Gi=difference}.  The recursions
(\ref{Grecursion}), or equivalently, (\ref{Gjk-1i}), determine all the
$G_s$ for $s \geq 1$ from $G_1, \dots, G_k$, and we know from Theorem
\ref{G} that all the $G_s$, $s \geq 1$, are formal power series.

We have the following ``complement'' to Proposition
\ref{h=partitions}, reflecting and illustrating the complementary
nature of the recursions (\ref{hlj}) and (\ref{Grecursion}) (or
equivalently, of (\ref{h=hA}) and (\ref{G=BG})):

\begin{theorem}\label{G=partitions}
For $j \geq 0$ and $l = 1, \dots, k$, the formal power series
$G_{(k-1)j+l}$ is the generating function for partitions with
difference at least 2 at distance $k-1$ such that the smallest part is
greater than $j$ and such that $j+1$ appears at most $k-l$ times.
\end{theorem}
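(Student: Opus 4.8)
The plan is to imitate the proof of Proposition \ref{h=partitions}: introduce the combinatorial generating functions named in the statement and show that they obey the same initial data and the same recursions as the series $G_{(k-1)j+l}$. For $j \geq 0$ and $l = 1, \dots, k$, let $\tilde{G}_{(k-1)j+l}$ be the generating function for partitions with difference at least $2$ at distance $k-1$ whose smallest part exceeds $j$ and in which $j+1$ appears at most $k-l$ times; the goal is $G_{(k-1)j+l} = \tilde{G}_{(k-1)j+l}$. The base case $j=0$ is immediate, since the condition ``smallest part $>0$'' is vacuous and ``$1$ appears at most $k-l$ times'' is exactly the condition in Theorem \ref{Gi=difference}, so $\tilde{G}_l = G_l$ for $l = 1, \dots, k$. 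Because the recursion (\ref{Grecursion}), solved for its level-$j$ term, together with the tautology (\ref{tautology}) expresses ${\bf G}_{(j)}$ through ${\bf G}_{(j-1)}$---this is exactly the matrix relation (\ref{G=BG})---the two sequences will agree for all $j$ as soon as I check that the $\tilde{G}$'s satisfy these same relations.

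I would first handle (\ref{tautology}). The series $\tilde{G}_{(k-1)(j-1)+k}$ counts partitions with smallest part $> j-1$ in which $j$ appears at most $k-k = 0$ times, i.e. partitions with smallest part $> j$. The series $\tilde{G}_{(k-1)j+1}$ counts partitions with smallest part $> j$ in which $j+1$ appears at most $k-1$ times; but in a partition with difference at least $2$ at distance $k-1$ no value can appear $k$ times (two equal parts would then lie at distance $k-1$), so this multiplicity bound is automatic. Thus both series enumerate partitions with smallest part $> j$, and (\ref{tautology}) holds for the $\tilde{G}$'s.

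The substance lies in the recursion (\ref{Grecursion}), which for $l = 1, \dots, k-1$ I would rewrite as
\[
\tilde{G}_{(k-1)(j-1)+l} - \tilde{G}_{(k-1)(j-1)+l+1} = q^{(k-l)j}\,\tilde{G}_{(k-1)j+k-l+1}.
\]
The left-hand side enumerates partitions with smallest part $> j-1$ in which $j$ appears \emph{exactly} $k-l$ times. Deleting all $k-l$ copies of $j$ lowers the weight by $(k-l)j$ and should be the sought bijection onto partitions with smallest part $> j$ in which $j+1$ appears at most $l-1 = k-(k-l+1)$ times, which is precisely what $\tilde{G}_{(k-1)j+k-l+1}$ counts.

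The main obstacle will be verifying that this map and its inverse preserve the difference-two-at-distance-$(k-1)$ condition and correctly track the multiplicity of $j+1$. The key observation is that the parts lying in $\{j,\,j+1\}$ total at most $k-1$ (otherwise two of them would sit at distance $k-1$ and differ by at most $1$); since exactly $k-l$ of them equal $j$, at most $l-1$ equal $j+1$, giving the stated bound. Deletion is harmless because the copies of $j$ occupy the last $k-l$ slots in non-increasing order, so the surviving distance-$(k-1)$ constraints form a subset of the original ones. Conversely, appending $k-l$ copies of $j$ to a partition with smallest part $> j$ and at most $l-1$ copies of $j+1$ introduces only new constraints pairing an appended $j$ with the part $k-1$ places above it; that part sits at least $l$ positions from the bottom, hence strictly above the block of at most $l-1$ copies of $j+1$, so it is $\geq j+2$ and the difference is $\geq 2$. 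This yields the bijection, hence (\ref{Grecursion}) for the $\tilde{G}$'s, and the theorem follows from the uniqueness noted above.
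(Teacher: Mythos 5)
Your proposal is correct and follows essentially the same route as the paper: verify that the combinatorial generating functions share the initial values given by Theorem \ref{Gi=difference} (the case $j=0$), that they satisfy the tautology (\ref{tautology}), and that they satisfy the recursion (\ref{Grecursion}) via the deletion-of-all-copies-of-$j$ bijection, which is exactly the combinatorial identity the paper invokes. Your write-up merely makes explicit the bijective details (the multiplicity bound on parts in $\{j, j+1\}$ and the preservation of the distance-$(k-1)$ condition under deletion and re-insertion) that the paper leaves to the reader.
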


\begin{proof}
It is sufficient to show that these combinatorial generating functions
have the same initial values and recursions as the formal power series
$G_{(k-1)j+l}$.  The recursion (\ref{Grecursion}) for $j \geq 1$ and
$l = 1, \dots, k-1$ corresponds to the following combinatorial fact:
For $j \ge 1$ and $l = 1, \dots, k-1$,
\begin{eqnarray}
\lefteqn{\text{the number of partitions of $m$ with difference at
least 2 at distance $k-1$ such that}}
\nonumber \\
& & \text{the smallest part is at least $j$ and such that $j$
appears exactly $k-l$ times }
\nonumber \\
& = & 
\text{the number of partitions of $m-(k-l)j$ with difference at least
2 at distance}
\nonumber \\
& & \quad
\text{$k-1$ such that the smallest part is greater than $j$ and such
that $j+1$ appears} 
\nonumber \\ 
& & \quad
\text{at most $l-1$ times.}
\nonumber 
\end{eqnarray}
By Theorem \ref{Gi=difference} (the case $j=0$), these assertions
prove the result.  (Note that the combinatorial interpretations of the
two expressions equated in (\ref{tautology}) indeed agree.)
\end{proof}

For $j=0$, (\ref{Gi}) says simply that $G_i = G_i$, and for $j \geq
1$, combining Proposition \ref{h=partitions} and Theorem
\ref{G=partitions} we immediately have:

\begin{theorem}\label{interp}
For $l = 1, \dots, k$ and $j \geq 1$, the right-hand side of
(\ref{Gi}) expresses the generating function $G_i$ as the sum of its
contributions corresponding to the number of times, namely, $0, 1,
\dots, k-1$, that the part $j$ appears in a partition.
\end{theorem}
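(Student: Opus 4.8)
The theorem to prove is Theorem \ref{interp}: that for $j \geq 1$, the equation (\ref{Gi}) decomposes $G_i$ according to the multiplicity of the part $j$. My plan is to read off this result directly from the two combinatorial interpretations already established, so the "proof" is really a matter of assembling the right bookkeeping rather than producing new estimates.

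First I would recall exactly what each factor in (\ref{Gi}) counts. By Proposition \ref{h=partitions}, the coefficient ${}_ih^{(j)}_l$ is the generating function for partitions of type $(k-1,k-i)$ whose largest part is at most $j$ and in which $j$ appears exactly $l-1$ times. By Theorem \ref{G=partitions}, the factor $G_{(k-1)j+l}$ is the generating function for partitions with difference at least $2$ at distance $k-1$ whose smallest part is strictly greater than $j$ and in which $j+1$ appears at most $k-l$ times. The key observation is that multiplying these two generating functions corresponds to concatenating a "bottom piece" (parts $\leq j$) with a "top piece" (parts $> j$) to form a single partition, and that the product ${}_ih^{(j)}_l \, G_{(k-1)j+l}$ is therefore the generating function for all partitions of the global type $(k-1,k-i)$ in which $j$ appears exactly $l-1$ times.

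The main step, and the only place requiring genuine care, is to verify that concatenating a bottom piece and a top piece with the stated constraints yields precisely the partitions of type $(k-1,k-i)$ in which $j$ occurs exactly $l-1$ times, with no double-counting and no omissions. Here I would check the interface condition: the bottom piece contributes the parts equal to $j$ (there are $l-1$ of them) together with all smaller parts, while the top piece contributes all parts strictly larger than $j$; the compatibility of the two "$j+1$ appears at most $k-l$ times" and "$j$ appears exactly $l-1$ times" constraints is exactly what guarantees that the difference-at-distance-$(k-1)$ condition is preserved across the boundary between the two pieces. This is the analogue, at the level of partitions, of the factorization of the product of matrix entries in (\ref{Gi}), and it is where the shifts $j \mapsto j+1$ in the two propositions must be seen to dovetail.

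Finally, summing ${}_ih^{(j)}_l \, G_{(k-1)j+l}$ over $l = 1, \dots, k$ ranges over all possible multiplicities $l-1 = 0, 1, \dots, k-1$ of the part $j$ (the multiplicity cannot exceed $k-1$ precisely because of the difference-$2$-at-distance-$(k-1)$ condition). Since by Theorem \ref{Gi=difference} the left-hand side $G_i$ is the generating function for all partitions of type $(k-1,k-i)$, the identity (\ref{Gi}) is then exactly the partition of this count according to the multiplicity of $j$, which is the assertion of the theorem. I expect the decomposition itself to be immediate once the interface bookkeeping in the middle step is settled; the only genuine obstacle is making that gluing argument precise rather than merely plausible.
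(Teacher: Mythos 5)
Your proposal is correct and follows exactly the paper's route: the paper derives Theorem \ref{interp} precisely by combining Proposition \ref{h=partitions} and Theorem \ref{G=partitions} (together with Theorem \ref{Gi=difference}), which is what you do, with the additional benefit that you make explicit the gluing argument---parts $\leq j$ versus parts $> j$, with the run of $j$'s and $(j{+}1)$'s of total length at most $k-1$ enforcing the difference-two-at-distance-$(k-1)$ condition across the interface---that the paper leaves implicit in the word ``immediately.'' Your interface check is exactly the right verification, so there is no gap.
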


\begin{remark}
{\em With the benefit of the picture that has emerged, we can give an
alternate, shorter proof of Theorem \ref{Gi=difference} (Gordon's
identities), without needing Proposition \ref{hrecursions} or
\ref{h=partitions}, using the following uniqueness observation: Let
$J_1, J_2, \dots$ be a sequence of formal power series in $q$ with
constant term $1$ satisfying the recursions (\ref{Grecursion}) for $j
\geq 1$ and $l = 1, \dots, k-1$ (with $J$ in place of $G$), and
suppose that the Empirical Hypothesis holds for $J_1, J_2, \dots$.
The comments above in connection with (\ref{Gi}) give a sequence of
expressions of the form (\ref{Gi}) (with $J$ in place of $G$), with
the coefficients the same polynomials ${}_ih^{(j)}_l$ as in (\ref{Gi})
(and now, we do not have to compute them).  By the Empirical
Hypothesis, the $k$ formal power series $J_1, \dots, J_k$ are uniquely
determined, and thus so is the whole sequence $J_1, J_2, \dots$.  But
by the proof of Theorem \ref{G=partitions}, the combinatorial
generating functions defined in the statement of Theorem
\ref{G=partitions} form a sequence $K_1, K_2, \dots$ of formal power
series with constant term $1$ satisfying the recursions
(\ref{Grecursion}), and the Empirical Hypothesis trivially holds for
$K_1, K_2, \dots$.  Thus by the uniqueness, $J_s = K_s$ for each $s
\geq 1$.  Then by Theorem \ref{G}, which gives the Empirical
Hypothesis for $G_1, G_2, \dots$, we have (without using Theorem
\ref{Gi=difference}) that $G_s = K_s$ for each $s \geq 1$, and this
statement for $s = 1, \dots, k$ constitutes Gordon's identities.  This
remark generalizes the corresponding alternate proof of the
Rogers-Ramanujan identities discsussed in \cite{AB}, \cite{R} and
\cite{A3}.}

\end{remark}

\vspace{.3in}

\noindent {\small \sc Department of Mathematics, Rutgers University,
Piscataway, NJ 08854} \\ {\em E--mail address}:
lepowsky@math.rutgers.edu \\

\vspace{.1in}

\noindent {\small \sc Department of Mathematics, Rutgers University,
Piscataway, NJ 08854} \\
\noindent Current address:\\
\noindent{\small \sc 
Mathematical Sciences Center, Tsinghua University, Beijing, China 100084}\\
{\em E--mail address}:
mxzhu@math.tsinghua.edu.cn \\

\end{document}